\newtheorem{theorem}{Theorem}[section]
\newtheorem{corollary}[theorem]{Corollary}
\newtheorem{conjecture}[theorem]{Conjecture}
\title[Hyponormality for the Ces\`{a}ro matrix of positive integer order]{A conjecture on hyponormality for the Ces\`{a}ro matrix of positive integer order}
\author{H. C. Rhaly Jr.}
\address{ H. C. Rhaly Jr.
\newline\hphantom{iii} 1081 Buckley Drive
\newline\hphantom{iii} Jackson, MS  39206, U.S.A.}
\email{rhaly@member.ams.org}
\subjclass[2010]{Primary 47B20}
\keywords{N\"{o}rlund matrix, Ces\`{a}ro matrix, posinormal operator, hyponormal operator}
\begin{document}

\maketitle

\begin{abstract}
It is already known that the Ces\`{a}ro matrices of orders one and two are coposinormal, hyponormal operators on $\ell^2$.  Here it is shown that the Ces\`{a}ro matrices of order three and four are also coposinormal, hyponormal; the proofs employ posinormality, achieved by means of a diagonal interrupter, and elementary computational techniques from calculus.   A conjecture is then propounded for the Ces\`{a}ro matrix of positive integer order greater than four.
\end{abstract}

\section{Introduction}

Lat $a:\equiv \{a_n\}$ denote a sequence of nonnegative numbers with $a_0 > 0$, and take $S_n :\equiv \sum_{j=0}^n a_j > 0$.  The Norlund matrix $M_a :\equiv [m_{ij}]_{i,j \geq 0}$ is defined by
\[m_{ij} = \left \{ \begin{array}{lll}
a_{i-j}/S_i & for &  0 \leq j \leq i \\
0 & for & j > i .\end{array}\right.\]

\noindent The choice $a_n = \binom{n+ \alpha -1}{ \alpha - 1}$ generates $C(\alpha)$, the Ces\`{a}ro matrix of order $\alpha$ [\textbf{8}, p. 442]; here we will be concerned with the case in which $\alpha$ is a positive integer.

If $\mathcal{B}(H)$ denotes the set of all bounded linear operators on a Hilbert space $H$, then the operator $A \in \mathcal{B}(H)$ is \textit{hyponormal}  if \[< (A^*A - AA^*) f , f > \hspace{2mm}  \geq \hspace{2mm} 0\]  for all $f \in H$.  The operator $A \in \mathcal{B}(H)$ is said to be  \textit{posinormal} (see [\textbf{2]} , [\textbf{4}]) if \[AA^* = A^*PA\] for some positive operator $P \in \mathcal{B}(H)$, called the \textit{interrupter}.   The operator $A$ is \textit{coposinormal} if $A^*$ is posinormal.

First, consider $C(1)$, the Ces\`{a}ro matrix of order $1$  (take $\alpha = 1$), whose entries $m_{ij}$ are given by
\[m_{ij} = \left \{ \begin{array}{lll} \frac{1}{i+1} & for &  0 \leq j \leq i \\
0 & for & j > i .\end{array}\right.\]
\noindent In [\textbf{4}] it was observed that $C(1) \in B(\ell^2)$ satisfies \[C(1)C(1)^* = C(1)^*PC(1)\]  where \begin{equation*} P :\equiv diag \left \{ \frac{n+1}{n+2} : n \geq 0 \right \}; \end{equation*}  therefore, 
\begin{align*}  < (C(1)^*C(1) - C(1)C(1)^*) f , f > = < (C(1)^*C(1) - C(1)^*PC(1)) f , f > \end{align*}   \begin{align*}  = < (I-P)C(1) f , C(1)f >  \hspace{2mm}  \geq \hspace{2mm} 0 \end{align*}  
for all $f \in \ell^2$, so $C(1)$ is a hyponormal operator on $\ell^2$.  In this manner posinormality was used to give a proof of hyponormality for $C(1)$ that is different from an earlier one found in [\textbf{1}].

Next, consider $C(2)$, the Ces\`{a}ro matrix of order $2$, whose entries $m_{ij}$ given by
\[m_{ij} = \left \{ \begin{array}{lll} \frac{2(i+1-j)}{(i+1)(i+2)} & for &  0 \leq j \leq i \\
0 & for & j > i .\end{array}\right.\]
It was recently discovered in [\textbf{6}]  that $C(2) \in B(\ell^2)$  satisfies \[C(2)C(2)^* = C(2)^*PC(2)\]  where \begin{equation*} P :\equiv diag \left \{ \frac{(n+1)(n+2)}{(n+3)(n+4)} : n \geq 0  \right \}  \end{equation*} with \[I-P \geq 0,\] so $C(2)$ is also hyponormal on $\ell^2$.  (See also [\textbf{7}].)  The computations in [\textbf{6}] centered on coposinormality, and the diagonal form of $P$ emerged somewhat serendipitously from those computations.

The aim of this note is to apply a suitably altered version of this approach to the Ces\`{a}ro matrices of orders $3$ and $4$.  Rather than centering on coposinormality, here we aim directly for posinormality, armed now with a reasonable diagonal candidate to function as the interrupter $P$.  Besides posinormality, the proofs in the next two sections rely primarily on elementary techniques; namely, the use of 
\begin{itemize}

\item  formulas for sums of powers of positive integers and

\item telescoping sums to evaluate infinite series.

\end{itemize}

\noindent  After these proofs are complete, a conjecture will be propounded regarding posinormality (achieved with a specified diagonal interrupter), hyponormality, and coposinormality on $\ell^2$ for the more general Ces\`{a}ro matrix of positive integer order.

\section{The Ces\`{a}ro matrix of order $3$}

Under consideration in this section will be the Ces\`{a}ro matrix of order $3$, $M : \equiv C(3) \in B(\ell^2)$; the entries $m_{ij}$ of $M$ are given by 
\[m_{ij} = \left \{ \begin{array}{lll}
\frac{3(i+1-j)(i+2-j)}{(i+1)(i+2)(i+3)} & for &  0 \leq j \leq i \\
0 & for & j > i .\end{array}\right.\]   Before continuing on, we note that the range of M contains all the $e_n$'s from the standard orthonormal basis for $\ell^2$ since  
\[M \left[ \frac{(n+1)(n+2)(n+3)}{3!} (e_n-3e_{n+1}+3e_{n+2} - e_{n+3}) \right]  = e_n . \] 
   
In view of the considerations mentioned in the introduction, we first take  \begin{equation*} P :\equiv diag \left \{ \frac{(n + 1)(n + 2)(n + 3)}{(n + 4)(n + 5)(n + 6)} : n \geq 0 \right \}, \end{equation*} and then compute  $M^*PM$.  The ensuing calculations have been assisted by [\textbf{9}].  For $j \geq i$, the ($i$, $j$)-entry of $M^*PM$ is \begin{equation*} \sum_{k=0}^{\infty}\frac{9(j + 1 - i + k)(j + 2 - i + k)(k + 1)(k+2)}{(j + 1 + k)(j + 2  + k)(j + 3  + k)(j + 4 +  k)(j + 5  + k)(j + 6  + k)}. \end{equation*}

\noindent The series is telescoping, as can be seen by rewriting the summand as \begin{equation*} s(k) - s(k+1) \end{equation*} where  \begin{equation*} s(k) :\equiv   \frac{9(c_4k^4+c_3k^3+c_2k^2+c_1k+c_0)}{(j + 1 + k)(j + 2 + k)(j + 3 + k)(j + 4 + k)(j + 5 + k)} \end{equation*}
with \[ c_4 =1 ,  \hspace{2mm} c_3 = 8 - i + 3 j, \hspace{2mm} c_2 = \frac{1}{3} \cdot (71 - 15 i + i^2 + 57 j - 5 ij + 10 j^2) , \]

\[ c_1 = \frac{1}{6} \cdot (180 - 48 i + 6 i^2 + 236 j - 36 i j + i^2 j + 90 j^2 - 5 i j^2 + 
   10 j^3) , \]
 \noindent and 
   \[ c_0 = \frac{1}{30} \cdot (j + 4) (j + 5) (20 - 6 i + i^2 + 30 j - 5 i j + 10 j^2). \]

\noindent Consequently, for $j \geq i$, the ($i$, $j$)-entry of $M^*PM$ in simplified form is \begin{equation*} s(0) = \frac{9c_0}{(j + 1)(j + 2)(j + 3)(j + 4)(j + 5)} = \frac{3(20 - 6 i + i^2 + 30 j - 5 i j + 10 j^2)}{10(j + 1)(j + 2)(j + 3)}.\end{equation*}  

\noindent For $j \geq i$, the ($i$,$j$)-entry of $MM^*$ is  \begin{align}  \sum_{k=0}^{i} \frac{3(i-k+1)(i-k+2)}{(i+1)(i+2)(i+3)} \cdot \frac{3(j-k+1)(j-k+2)}{(j+1)(j+2)(j+3)} =  \end{align}  \begin{align*}  \frac{9}{(i+1)(i+2)(i+3)(j+1)(j+2)(j+3)} \cdot \sum_{k=0}^i k^4 -  d_3 k^3 + d_2 k^2 - d_1 k + d_0 .\end{align*} 
where \begin{align*} d_3 = 2(i+j+3), \end{align*} \begin{align*} d_2 = j^2 + 4ij + 9j + i^2 +9i + 13, \end{align*}  \begin{align*} d_1 = 2ij^2+3j^2+2i^2j+12ij+13j+3i^2+13i+12,  \end{align*} and  \begin{align*} d_0 = i^2j^2 + 3ij^2+2j^2+3i^2j+9ij+6j+2i^2+6i+4. \end{align*}  

\noindent Using the formulas for sums of powers of integers, expanding, and then factoring the result, one finds that the final summation in ($2.1$) becomes    \begin{align*}  \frac{1}{30} (i+1)(i+2)(i+3)(20 - 6 i + i^2 + 30 j - 5 i j + 10 j^2).\end{align*}  \noindent Substituting this result for that summation into ($2.1$) and simplifying, one obtains \begin{equation*} \frac{3(20 - 6 i + i^2 + 30 j - 5 i j + 10 j^2)}{10(j+1)(j+2)(j+3)}.  \end{equation*}  Thus it is seen that for $j \geq i$, the ($i$,$j$)-entry of $MM^*$  is the same as the ($i$, $j$)-entry of $M^*PM$;  by symmetry, the computations for $i \geq j$ are similar to those just presented, so it follows that $MM^*=M^*PM$.  Since it is clear that $I-P \geq 0$, the proof of hyponormality for $C(3)$ is complete, and the result is recorded below.
 
 \begin{theorem}  The Ces\`{a}ro matrix of order $3$ is a hyponormal operator on $\ell^2$.
 \end{theorem}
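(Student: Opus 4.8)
The plan is to produce a positive diagonal operator $P$ for which $MM^* = M^*PM$, where $M := C(3)$, and then to conclude hyponormality exactly as was done for $C(1)$ and $C(2)$: once this operator identity is available,
\[
\langle (M^*M - MM^*)f,\, f\rangle = \langle (M^*M - M^*PM)f,\, f\rangle = \langle (I-P)Mf,\, Mf\rangle \geq 0
\]
for every $f \in \ell^2$, provided only that $I - P \geq 0$. Guided by the pattern seen in orders $1$ and $2$, the natural candidate is
\[
P := \operatorname{diag}\left\{ \frac{(n+1)(n+2)(n+3)}{(n+4)(n+5)(n+6)} : n \geq 0 \right\},
\]
whose diagonal entries all lie in $(0,1)$, so that $I - P \geq 0$ is immediate; thus the entire burden falls on verifying the operator identity $MM^* = M^*PM$.

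To establish that identity I would compute the $(i,j)$-entries of both sides and check that they agree; by the symmetry of both $MM^*$ and $M^*PM$ it suffices to treat $j \geq i$. The $(i,j)$-entry of $MM^*$ is the \emph{finite} sum $\sum_{k=0}^{i} m_{ik}m_{jk}$, which after factoring out the common denominator is the sum of a degree-four polynomial in $k$; evaluating it via the standard closed forms for $\sum_{k=0}^{i} k^4,\ \sum k^3,\ \ldots$ and then factoring the result should yield a clean rational function of $i$ and $j$. The $(i,j)$-entry of $M^*PM$ is instead the \emph{infinite} series $\sum_{k=0}^{\infty} m_{k+j,\,i}\, p_{k+j}\, m_{k+j,\,j}$ (only $k \geq 0$ contribute, since $m_{rs} = 0$ for $s > r$, and $j = \max(i,j)$); the crucial step is to recognize this series as telescoping, that is, to rewrite its summand as $s(k) - s(k+1)$ for an explicit rational function $s$, whereupon the series collapses to $s(0)$. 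A final comparison shows the two rational functions of $(i,j)$ coincide, and symmetry handles the case $i \geq j$.

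The main obstacle is producing the telescoping decomposition for the $M^*PM$ series: one must guess the correct shape of $s(k)$ — a ratio whose denominator is a shifted product $(j+1+k)(j+2+k)\cdots(j+5+k)$ and whose numerator is a degree-four polynomial in $k$ with coefficients depending on $i$ and $j$ — and then pin down those coefficients, for instance by matching the partial-fraction data of $s(k) - s(k+1)$ against the original summand, which reduces to solving a small linear system. Once $s(k)$ has been correctly identified, everything that remains — the telescoping evaluation, the power-sum computation for $MM^*$, the factorizations, and the entry-by-entry comparison — is elementary, if bookkeeping-intensive, algebra for which a computer algebra system is a useful aid. With $MM^* = M^*PM$ in hand and $I - P \geq 0$, the theorem follows; and since each diagonal entry of $P$ is strictly positive, the same computation also shows $M$ is coposinormal, though only hyponormality is recorded here.
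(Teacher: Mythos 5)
Your proposal is correct and follows essentially the same route as the paper: the same diagonal interrupter $P$, the same entrywise verification of $MM^*=M^*PM$ (power-sum formulas for the finite sum, a telescoping decomposition $s(k)-s(k+1)$ with a degree-four numerator over $(j+1+k)\cdots(j+5+k)$ for the infinite series), and the same conclusion from $I-P\geq 0$. The paper simply carries out the bookkeeping you describe, recording the explicit coefficients $c_0,\ldots,c_4$ and $d_0,\ldots,d_3$ with computer-algebra assistance.
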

 
 The availability of the diagonal interrupter $P$ from the proof above makes the following corollary possible.

\begin{corollary}   If $M$ is the Ces\`{a}ro matrix of order $3$, then $M$ is coposinormal (i.e., $M^*$ is posinormal).
\end{corollary}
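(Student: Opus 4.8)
The plan is to read the corollary off Theorem 2.1 with essentially no new computation, by exploiting a second feature of the diagonal interrupter $P$ produced there: not only is $I-P\ge 0$ (which yielded hyponormality), but $P$ is also bounded \emph{below} by a positive multiple of the identity. The tool is the standard range/factorization characterization of posinormality recorded in [\textbf{2}], [\textbf{4}] (a form of Douglas's factorization theorem): an operator $A\in\mathcal{B}(H)$ is posinormal if and only if $\operatorname{ran}(A)\subseteq\operatorname{ran}(A^*)$, equivalently $AA^*\le\lambda^2 A^*A$ for some $\lambda\ge 0$. Applied to $A=M^*$ — so that posinormality of $M^*$, i.e.\ coposinormality of $M$, amounts to $M^*M\le\lambda^2 MM^*$ — the whole matter reduces to a single scalar inequality.

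First I would observe that the diagonal entries $p_n=(n+1)(n+2)(n+3)/[(n+4)(n+5)(n+6)]$ of $P$ satisfy $p_n\ge 1/20$ for every $n\ge 0$: one has $p_0=1/20$, and the elementary inequality $20(n+1)(n+2)(n+3)\ge(n+4)(n+5)(n+6)$ holds for all $n\ge 0$ (equality at $n=0$, the left side dominating thereafter). Hence $P\ge\tfrac{1}{20}I$ as operators on $\ell^2$, so that, using the identity $MM^*=M^*PM$ from Theorem 2.1, for each $f\in\ell^2$
\[ \langle MM^*f,f\rangle \;=\; \langle P(Mf),Mf\rangle \;\ge\; \tfrac{1}{20}\,\|Mf\|^2 \;=\; \tfrac{1}{20}\,\langle M^*Mf,f\rangle , \]
and therefore $M^*M\le 20\,MM^*$.

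It remains to invoke the characterization. From $(M^*)(M^*)^*=M^*M\le 20\,MM^*$ and Douglas's theorem we obtain $\operatorname{ran}(M^*)\subseteq\operatorname{ran}(M)$, hence $M^*=MR$ for some $R\in\mathcal{B}(\ell^2)$; taking adjoints, $M=R^*M^*$, and consequently $M^*M=(MR)(MR)^*=M(RR^*)M^*$. Thus $M^*$ is posinormal with positive interrupter $Q:=RR^*$, which is precisely the statement that $M$ is coposinormal.

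There is no genuine obstacle here; the entire substance sits in Theorem 2.1, and this corollary is a short deduction. The one point that must not be glossed over is the verification that $P$ is uniformly bounded away from $0$: it is exactly this positive lower bound on $P$ (its invertibility, in effect) that lets one turn the equality $MM^*=M^*PM$ into the inequality $M^*M\le 20\,MM^*$ needed to run the range characterization in the direction $\operatorname{ran}(M^*)\subseteq\operatorname{ran}(M)$. It is worth noting that the two halves of the containment $\tfrac{1}{20}I\le P\le I$ do complementary work: the upper bound produced hyponormality in Theorem 2.1, and the lower bound produces coposinormality here.
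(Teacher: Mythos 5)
Your proof is correct and rests on the same key fact as the paper's: the interrupter $P$ from Theorem 2.1 is invertible (your quantitative bound $P\ge\tfrac{1}{20}I$ is exactly that). The paper simply cites [\textbf{5}, Theorem 1(d)] for the implication ``posinormal with invertible interrupter $\Rightarrow$ coposinormal,'' whereas you reprove that implication from scratch via Douglas's factorization theorem and the characterizations in [\textbf{4}, Theorem 2.1]; the substance is the same.
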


\begin{proof}  Apply [\textbf{5}, Theorem $1$(d)], using the fact that the interrupter $P$ in the proof above is invertible.
\end{proof}

\begin{corollary}  If $M$ is the Ces\`{a}ro matrix of order $3$, then both $M$ and $M^*$ are injective and have dense range with \[Ran (M) = Ran (M^*).\]
\end{corollary}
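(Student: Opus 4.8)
The plan is to handle the injectivity and dense-range claims by soft arguments and then to extract the range equality from the operator identity $MM^* = M^*PM$ obtained in the proof of Theorem 2.1, exploiting the fact that the diagonal interrupter $P$ is not merely invertible but \emph{boundedly} invertible.

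For injectivity of $M$: it is lower triangular with nonzero diagonal entries $m_{ii} = 6/[(i+1)(i+2)(i+3)]$, so if $Mf = 0$ then equating the coordinates of $Mf$ to $0$ successively forces $f_0 = f_1 = \cdots = 0$. For dense range of $M$: the remark immediately preceding the statement exhibits each standard basis vector $e_n$ as a member of $\operatorname{Ran}(M)$, and the finitely supported sequences span a dense subspace of $\ell^2$. The analogous statements for $M^*$ are then free: $\ker(M^*) = \operatorname{Ran}(M)^\perp = \{0\}$ because $\operatorname{Ran}(M)$ is dense, and $\overline{\operatorname{Ran}(M^*)} = (\ker M)^\perp = \ell^2$ because $M$ is injective.

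It remains to prove $\operatorname{Ran}(M) = \operatorname{Ran}(M^*)$. Here I would first record that the entries $(n+1)(n+2)(n+3)/[(n+4)(n+5)(n+6)]$ of $P$ form an increasing sequence running from $\tfrac{1}{20}$ up to $1$, so $\tfrac{1}{20} I \le P \le I$, whence $P$ and $P^{1/2}$ lie in $B(\ell^2)$ together with their inverses. The identity $MM^* = M^*PM$ then reads $MM^* = (M^*P^{1/2})(M^*P^{1/2})^*$, and the standard formula $\operatorname{Ran}(T) = \operatorname{Ran}\big((TT^*)^{1/2}\big)$, valid for every $T \in B(\ell^2)$, gives $\operatorname{Ran}(M) = \operatorname{Ran}(M^*P^{1/2})$; since $P^{1/2}$ is a bijection of $\ell^2$ onto itself, the right-hand side equals $\operatorname{Ran}(M^*)$. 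Equivalently, one may invoke Douglas's range-inclusion theorem directly: $MM^* = M^*PM \le \|P\|\, M^*M$ yields $\operatorname{Ran}(M) \subseteq \operatorname{Ran}(M^*)$, while $M^*M \le 20\, M^*PM = 20\, MM^*$ (the inequality using $P \ge \tfrac{1}{20}I$) yields $\operatorname{Ran}(M^*) \subseteq \operatorname{Ran}(M)$. A third route is simply to quote posinormality (implicit in the proof of Theorem 2.1) for the first inclusion and coposinormality (Corollary 2.2) for the second.

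I do not anticipate a genuine obstacle. The one place demanding a little care is the verification that $P$ is \emph{boundedly} invertible, i.e.\ that $\inf_n P_n > 0$; this is precisely what allows the argument to conclude equality of the literal ranges $\operatorname{Ran}(M)$ and $\operatorname{Ran}(M^*)$ rather than only of their closures, and it is immediate once one observes that $\{P_n\}$ is increasing with $P_0 = \tfrac{1}{20}$. Everything else is routine bookkeeping with well-known Hilbert-space facts.
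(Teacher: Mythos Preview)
Your proof is correct. Your third route is exactly the paper's argument: the paper cites the posinormality results of [\textbf{4}] to obtain $\operatorname{Ran}(M)\subseteq\operatorname{Ran}(M^*)$ and $\operatorname{Ker}(M)\subseteq\operatorname{Ker}(M^*)$, then invokes coposinormality (Corollary~2.2) for the reverse inclusions, notes $\operatorname{Ker}(M)=\{0\}$, and reads off dense range from $\operatorname{Ker}(M)=\operatorname{Ker}(M^*)=\{0\}$.

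Your routes 1 and 2 differ only in presentation, not in substance: the cited results in [\textbf{4}] and [\textbf{5}] are themselves proved via Douglas's range-inclusion theorem, so by writing out the Douglas inequalities $MM^*\le\|P\|\,M^*M$ and $M^*M\le 20\,MM^*$ (or the equivalent square-root/range identity) you are simply unpacking those citations and making the argument self-contained. The key ingredient in both your account and the paper's is that $P$ is \emph{boundedly} invertible; the paper hides this behind the phrase ``$P$ is invertible'' in Corollary~2.2, while you make the bound $P\ge\tfrac{1}{20}I$ explicit. Your handling of injectivity and dense range for $M$ (triangular structure, $e_n\in\operatorname{Ran}(M)$) is slightly more direct than the paper's, which instead passes through the kernel equality to reach dense range; either order works.
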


\begin{proof}  Since $M$ is posinormal, it follows from [\textbf{4}, Theorem $2.1$ and Corollary $2.3$] that \[Ran (M) \subseteq Ran (M^*)\]  and \[Ker (M) \subseteq Ker (M^*);\] since $M^*$ is also known to be posinormal (by the corollary above), the reverse inclusions must also hold; therefore,  \[Ker (M) = Ker (M^*)\] and \[Ran (M) = Ran (M^*).\]  It is easy to see that $Ker (M) = \{0\}$.  Consequently, both $M$ and $M^*$ are one-to-one, and both have dense range. 
\end{proof} 

\begin{corollary}  If $M$ is the Ces\`{a}ro matrix of order $3$, then $M^k$ is both posinormal and coposinormal for each positive integer $k$.
\end{corollary}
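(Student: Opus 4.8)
The plan is to establish, by induction on $k$, the single identity $\mathrm{Ran}(M^{k}) = \mathrm{Ran}((M^{*})^{k})$; granting this, posinormality of $M^{k}$ is the inclusion $\mathrm{Ran}(M^{k}) \subseteq \mathrm{Ran}((M^{k})^{*}) = \mathrm{Ran}((M^{*})^{k})$ and coposinormality of $M^{k}$ is the reverse inclusion, both by the range characterization of posinormality ([\textbf{4}, Theorem $2.1$] together with Douglas's factorization lemma). The base case $k = 1$ is Theorem $2.1$ together with Corollary $2.2$, the range identity being Corollary $2.3$; recall from the latter that $M$ and $M^{*}$ are injective with $\mathrm{Ran}(M) = \mathrm{Ran}(M^{*})$. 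For the inductive step the engine is the interrupter identity $MM^{*} = M^{*}PM$ from Theorem $2.1$ together with the key lemma that $\mathrm{Ran}(M)$ is invariant under $P$ and under $P^{-1}$; by Douglas's lemma this amounts to $PM = MB$ for some $B \in \mathcal{B}(\ell^{2})$, necessarily boundedly invertible (with $P^{-1}M = MB^{-1}$), so that $MM^{*} = M^{*}MB$ with $B$ surjective.

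Granting the lemma, the case $k = 2$ is immediate: $\mathrm{Ran}(M^{2}) = \mathrm{Ran}(MM^{*}) = \mathrm{Ran}(M^{*}PM) = \mathrm{Ran}(M^{*}MB) = \mathrm{Ran}(M^{*}M) = \mathrm{Ran}((M^{*})^{2})$, where the successive steps invoke $\mathrm{Ran}(M) = \mathrm{Ran}(M^{*})$, the interrupter identity, the relation $PM = MB$, the surjectivity of $B$, and $\mathrm{Ran}(M) = \mathrm{Ran}(M^{*})$ again. The higher cases follow the same pattern but need, at stage $j$, the analogue that $\mathrm{Ran}(M^{j})$ is invariant under $P$ and $P^{-1}$ --- equivalently, that $M^{-j}PM^{j}$ extends to a bounded, boundedly invertible operator $B_{j}$ on $\ell^{2}$ --- after which the relations $PM^{j} = M^{j}B_{j}$, the interrupter identity, the range identities already proved for smaller powers, and the injectivity of $M$ and $M^{*}$ combine to give $\mathrm{Ran}(M^{k}) = \mathrm{Ran}((M^{*})^{k})$.

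Everything above is formal once the invariance facts are in hand, so the real work --- and what I expect to be the main obstacle --- is the boundedness of those conjugated interrupters. Here I would use the explicit inverse already recorded in this section: since $M^{-1}e_{n} = \frac{(n+1)(n+2)(n+3)}{3!}\,(e_{n} - 3e_{n+1} + 3e_{n+2} - e_{n+3})$, a sequence $y$ lies in $\mathrm{Ran}(M)$ exactly when the third difference of $\big((n+1)(n+2)(n+3)\,y_{n}\big)_{n \ge 0}$ is square-summable, and for higher $j$ the analogous description of $\mathrm{Ran}(M^{j})$ comes from $(M^{-1})^{j}$, a finite-difference operator of order $3j$ with polynomial coefficients. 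Since $P_{n} - 1 = -\,3(3n^{2} + 21n + 38)/[(n+4)(n+5)(n+6)] = O(1/n)$, applying $P$ multiplies the relevant auxiliary sequence by $P_{n}$, and one must check that its third (respectively, $3j$-th) difference stays in $\ell^{2}$. Expanding by the Leibniz rule for the difference operator and bounding the iterated antidifferences of an $\ell^{2}$-sequence by Cauchy--Schwarz (an $m$-fold antidifference grows no faster than $n^{\,m - 1/2}$), the $O(1/n)$ decay of $P_{n} - 1$ turns out to be exactly enough to cancel the polynomial growth produced by the inverse; the same bookkeeping with $P^{-1}$ in place of $P$ yields the invertibility of $B_{j}$. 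This estimate is the one genuinely analytic ingredient, and the place where the precise arithmetic of the Ces\`{a}ro matrix of order $3$ --- the cubic scaling in $M^{-1}$ and the exact $O(1/n)$ size of $P_{n} - 1$ --- is used; everything else is manipulation with Douglas's lemma and the range identities, in the elementary spirit of the preceding two sections.
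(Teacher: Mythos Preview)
The paper's proof is a single sentence: it invokes [\textbf{3}, Corollary 1(b)], a general result to the effect that if an operator is both posinormal and coposinormal then so is every positive power. Since Theorem~2.1 and Corollary~2.2 have already established both properties for $M$, nothing further is needed---no induction, no range computations, and in particular nothing specific to the order-$3$ Ces\`aro matrix. Your route is entirely different: you attempt a direct proof tailored to this particular $M$, building the range identity $\mathrm{Ran}(M^{k})=\mathrm{Ran}((M^{*})^{k})$ by induction, with the $P$-invariance of $\mathrm{Ran}(M^{j})$ as the inductive engine. What the paper's approach buys is economy and generality; what yours would buy, if completed, is a self-contained argument that avoids the external reference---but at a substantial cost in length.

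There is, however, a genuine gap at exactly the point you flag as ``the main obstacle.'' The formal skeleton of your induction is sound: granted bounded, boundedly invertible $B_{j}$ with $PM^{j}=M^{j}B_{j}$, the chain of range equalities does close up (I checked $k=3$). For $j=1$ your Leibniz/Cauchy--Schwarz sketch works cleanly, each cross term landing at $O(n^{-3/2})$. But for $j\ge 2$ the operator $(M^{-1})^{j}$ is not $\Delta^{3j}$ composed with a single polynomial weight; it is the nested composition $\Delta^{3}\circ c\circ\Delta^{3}\circ c\circ\cdots$ with $c_{n}=(n+1)(n+2)(n+3)/6$, and verifying that $M^{-j}PM^{j}$ is bounded requires propagating the Leibniz expansion through every layer and controlling the differences of all the intermediate cross terms. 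That bookkeeping is not carried out, and the phrase ``turns out to be exactly enough'' is doing a lot of work. So the proposal is a plausible outline rather than a proof, and it rederives---for one operator, with considerable effort---a special case of a theorem already available.
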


\begin{proof}  This follows from [\textbf{3}, Corollary $1$(b)].
\end{proof}

\section{The Ces\`{a}ro matrix of order $4$}

Under consideration here will be the Ces\`{a}ro matrix of order $4$, $M : \equiv C(4) \in B(\ell^2)$; the entries $m_{ij}$ of $M$ are given by 
\[m_{ij} = \left \{ \begin{array}{lll}
\frac{4(i+1-j)(i+2-j)(i+3-j)}{(i+1)(i+2)(i+3)(i+4)} & for &  0 \leq j \leq i \\
0 & for & j > i .\end{array}\right.\]   Note that the range of M contains all the $e_n$'s from the standard orthonormal basis for $\ell^2$ since \[ M \left[ \frac{(n+1)(n+2)(n+3)(n+4)}{4!} (e_n-4e_{n+1}+6e_{n+2} - 4e_{n+3}+e_{n+4}) \right] = e_n.\]  
Again, the ensuing calculations have been assisted by [\textbf{9}].
   
First take  \begin{equation*} P :\equiv diag \left \{ \frac{(n + 1)(n + 2)(n + 3)(n+4)}{(n + 5)(n + 6)(n + 7)(n+8)} : n \geq 0 \right \}, \end{equation*} and then compute  $M^*PM$.  For $j \geq i$, the ($i$, $j$)-entry of $M^*PM$ is \begin{equation*} \sum_{k=0}^{\infty}\frac{16(j + 1 - i + k)(j + 2 - i + k)(j + 3 - i + k)(k + 1)(k+2)(k+3)}{\prod_{t=1}^8 (j + t + k)}. \end{equation*}

\noindent The series is telescoping, as can be seen by rewriting the summand as \begin{equation*} s(k) - s(k+1) \end{equation*} where  \begin{equation*} s(k) :\equiv   \frac{16(c_6k^6+c_5k^5+c_4k^4+c_3k^3+c_2k^2+c_1k+c_0)}{(j + 1 + k)(j + 2 + k)(j + 3 + k)(j + 4 + k)(j + 5 + k)(j+6+k)(j+7+k)} \end{equation*}
with    
   \[ c_6=1 ,  \hspace{2mm} c_5 = \frac{1}{2} \cdot (33 - 3 i + 9 j), \hspace{2mm} c_4 = \frac{1}{2} \cdot (227 - 35 i + 2 i^2 + 130 j - 9 i j + 17 j^2) , \]
\[ c_3 = \frac{1}{4} \cdot (1644 - 321 i + 34 i^2 - i^3 + 1495 j - 178 ij + 7 i^2j + 
   414 j^2 - 21 ij^2 + 35 j^3) , \]
   
   \[ c_2 = \frac{1}{20}  \cdot (16250 - 3580 i + 520 i^2 - 30 i^3 + 21080 j - 3243 i j + 
   240 i^2 j - 3 i^3 j + 9325 j^2 \] \[ - 840 i j^2 + 21 i^2 j^2 + 
   1680 j^3 - 63 i j^3 + 105 j^4) , \]
   \[ c_1 = \frac{1}{20} \cdot (16290 - 3825 i + 670 i^2 - 55 i^3 + 28675 j - 5104 i j + 
   525 i^2 j - 14 i^3 j + 18170 j^2 \]  \[\hspace{7mm}  - 2186 i j^2 + 108 i^2 j^2 - 
   i^3 j^2 + 5250 j^3 - 364 i j^3 + 7 i^2 j^3 + 700 j^4 - 21 i j^4 + 
   35 j^5) , \]
   
  \noindent and
    \[ c_0 = \frac{1}{140} \cdot (j+5) (j+6) (j+7) (210 - 51 i + 10 i^2 - i^3 + 385 j - 
   70 i j + 7 i^2 j + 210 j^2 - 21 i j^2 + 35 j^3). \]

\noindent Consequently, for $j \geq i$, the ($i$, $j$)-entry of $M^*PM$ in simplified form is \begin{align*} s(0) = \frac{16c_0}{(j + 1)(j + 2)(j + 3)(j + 4)(j + 5)(j+6)(j+7)} \end{align*}  \begin{align*} = \frac{4(210 - 51 i + 10 i^2 - i^3 + 385 j - 
   70 i j + 7 i^2 j + 210 j^2 - 21 i j^2 + 35 j^3)}{35(j + 1)(j + 2)(j + 3)(j+4)}.\end{align*}  

\noindent For $j \geq i$, the ($i$,$j$)-entry of $MM^*$ is  \begin{align*}  \sum_{k=0}^{i} \frac{4(i-k+1)(i-k+2)(i-k+3)}{(i+1)(i+2)(i+3)(i+4)} \cdot \frac{4(j-k+1)(j-k+2)(j-k+3)}{(j+1)(j+2)(j+3)(j+4)} =  \end{align*}  \begin{align}  \frac{16}{\prod_{t=1}^4(i+t)(j+t)} \cdot \sum_{k=0}^i  k^6 - d_5 k^5 + d_4k^4 -  d_3 k^3 + d_2 k^2 - d_1 k + d_0 .\end{align}  

\noindent where \begin{align*} d_5 = 3(i+j+4), \end{align*} \begin{align*} d_4 =  3j^2 + 9ij + 30j +  3i^2 + 30i + 58, \end{align*}  \begin{align*} d_3 = j^3+9ij^2+24j^2+9i^2j+72ij+116j+i^3+24i^2+116i+144,  \end{align*}  \begin{align*} d_2 =  3ij^3+6j^3+9i^2j^2+54ij^2+69j^2+3i^3j+54i^2j+210ij+216j \end{align*}   \begin{align*} +6i^3+69i^2+216i+193 , \end{align*}     \begin{align*} d_1 = 3i^2j^3+12ij^3+11j^3+3i^3j^2+36i^2j^2+105ij^2+84j^2+12i^3j+105i^2j \end{align*}   \begin{align*} +264ij+193j+11i^3+84i^2+193i+132, \end{align*} and  \begin{align*} d_0 = i^3j^3+6i^2j^3+11ij^3+6j^3+6i^3j^2+36i^2j^2+66ij^2+36j^2+11i^3j+66i^2j \end{align*}  \begin{align*} +121ij+66j+6i^3+36i^2+66i + 36   . \end{align*}      
  
  \noindent Using the formulas for sums of powers of integers, expanding, and then factoring the result, one finds that the final summation in ($3.2$) becomes    \begin{align*}  \frac{ \prod_{t=1}^4(i+t)}{140} \cdot  (210 - 51 i + 10 i^2 - 
   i^3 + 385 j - 70 i j + 7 i^2 j + 210 j^2 - 21 i j^2 + 35 j^3)
.\end{align*}

  \noindent  Substituting this result for that summation into ($3.2$) and simplifying, one obtains \begin{equation*} \frac{4(210 - 51 i + 10 i^2 - 
   i^3 + 385 j - 70 i j + 7 i^2 j + 210 j^2 - 21 i j^2 + 35 j^3)}{35(j+1)(j+2)(j+3)(j+4)}.  \end{equation*}  Thus it is seen that for $j \geq i$, the ($i$,$j$)-entry of $MM^*$  is the same as the ($i$, $j$)-entry of $M^*PM$;  by symmetry, the computations for $i \geq j$ are similar to those just presented, so it follows that $MM^*=M^*PM$.  Since it is clear that $I-P \geq 0$, the proof of hyponormality for $C(4)$ is complete.  The result is recorded below.
 
 \begin{theorem}  The Ces\`{a}ro matrix of order $4$ is a hyponormal operator on $\ell^2$.
 \end{theorem}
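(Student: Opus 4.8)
The plan is to replicate, for $C(4)$, the argument that has already worked for $C(1)$, $C(2)$, and $C(3)$: produce a positive diagonal operator $P$ for which $MM^* = M^*PM$, check that $I - P \geq 0$, and then conclude hyponormality exactly as in the introduction, via
\[ < (M^*M - MM^*) f , f > \hspace{1mm} = \hspace{1mm} < (I-P)Mf , Mf > \hspace{2mm} \geq \hspace{2mm} 0 \]
for all $f \in \ell^2$. Guided by the shape of the interrupters in the lower-order cases, the natural candidate is
\[ P := diag \left \{ \frac{(n+1)(n+2)(n+3)(n+4)}{(n+5)(n+6)(n+7)(n+8)} : n \geq 0 \right \}, \]
and, since each diagonal entry lies strictly between $0$ and $1$, the inequality $I - P \geq 0$ is automatic. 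The entire burden is therefore the operator identity $MM^* = M^*PM$, which I would verify entrywise.

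Fix $j \geq i$; the case $i \geq j$ will follow from the symmetry of both $MM^*$ and $M^*PM$. On the $M^*PM$ side the $(i,j)$-entry is an infinite series whose summand is a product of three linear factors in $k$ contributed by one copy of $M$, three more contributed by the other copy, and the reciprocal of $\prod_{t=1}^{8}(j+t+k)$ coming from $P$ acting on the columns. The crucial observation is that this series telescopes: I would look for a rational function $s(k) = 16\,q(k) / \prod_{t=1}^{7}(j+t+k)$, with $q$ a polynomial of degree six in $k$ whose coefficients depend on $i$ and $j$, such that the summand equals $s(k) - s(k+1)$. Matching numerators produces a linear system for the coefficients of $q$, which computer algebra solves; the series then collapses to $s(0)$, a rational function of $i$ and $j$ with denominator $(j+1)(j+2)(j+3)(j+4)$.

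For the $MM^*$ side, the $(i,j)$-entry is the finite sum $\sum_{k=0}^{i} m_{ik}m_{jk}$. I would multiply out the two cubic factors, regroup the result as $k^6 - d_5 k^5 + d_4 k^4 - d_3 k^3 + d_2 k^2 - d_1 k + d_0$ with explicit coefficients $d_\ell = d_\ell(i,j)$, substitute the standard closed forms for $\sum_{k=0}^{i} k^m$ with $m \leq 6$, and factor the resulting polynomial in $i$ and $j$. The expectation, to be confirmed by the computation, is that a factor $\prod_{t=1}^{4}(i+t)$ emerges, cancels the like factor in the denominator of the prefactor, and leaves precisely the same rational function in $i$ and $j$ that came out of $s(0)$. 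Equality of the two entries for all $j \geq i$, together with symmetry, gives $MM^* = M^*PM$, and hyponormality follows.

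The obstacle here is computational rather than conceptual: confirming that the degree-six telescoping ansatz really works — equivalently, that the two closed forms genuinely agree — means carrying several bivariate polynomials of degree up to six through expansion, power-sum substitution, and factorization with no slips. As with $C(3)$, this is best entrusted to a symbolic package; the only genuinely creative ingredient, namely the choice of the diagonal interrupter $P$, is already dictated by the pattern in orders one through three.
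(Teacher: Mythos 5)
Your proposal follows essentially the same route as the paper: the same diagonal interrupter $P$, the same telescoping evaluation of the $(i,j)$-entry of $M^*PM$ via a degree-six ansatz $s(k)$ with denominator $\prod_{t=1}^{7}(j+t+k)$, the same power-sum computation of the $(i,j)$-entry of $MM^*$ with the factor $\prod_{t=1}^{4}(i+t)$ emerging, and the same symmetry argument and posinormality-to-hyponormality conclusion. The approach is correct and matches the paper's proof in all essentials.
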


Obvious analogues of Corollaries $2.2$\textemdash$2.4$ hold for the Ces\`{a}ro matrix of order $4$.

\section{Conjecture for the Ces\`{a}ro matrix of positive integer order}

In conclusion, a conjecture is offered for the general case.  Note that $C(N)$ and $P$ have already been shown to satisfy the conclusion below when $N=1$, $2$, $3$, and $4$. 
\begin{conjecture}  If $N > 4$ is a positive integer and $C(N)$ is the Ces\`{a}ro matrix of order $N$, with entries $m_{ij}$ given by \[m_{ij} = \left \{ \begin{array}{lll}
\frac{N \prod_{t=1}^{N-1} (i+t-j)}{\prod_{t=1}^N (i+t)} & for &  0 \leq j \leq i \\
0 & for & j > i , \end{array}\right.\]  
then $C(N)$ is a bounded, posinormal operator on $\ell^2$ with interrupter \[P=P(N) :\equiv diag \left \{\frac{ \prod_{t=1}^{N}(n+t)}{\prod_{t=N+1}^{2N}(n+t) } : n \geq 0 \right \},\] and, consequently, $C(N)$ is also hyponormal and coposinormal.

\end{conjecture}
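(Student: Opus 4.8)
The plan is to carry out, for an arbitrary positive integer $N$, the two-sided computation used above for $N=3$ and $N=4$: evaluate the $(i,j)$-entry of $M^*PM$ for $j\ge i$ by exhibiting its defining series as a telescoping sum, evaluate the $(i,j)$-entry of $MM^*$ for $j\ge i$ as a finite sum, verify that the two agree, invoke symmetry to conclude $MM^*=M^*PM$, and then read off posinormality with interrupter $P=P(N)$, hyponormality, and coposinormality. Boundedness of $M=C(N)$ on $\ell^2$ is standard and may be cited (for instance, $C(N)$ is the Hausdorff matrix of the measure $N(1-x)^{N-1}\,dx$, and $\int_0^1 x^{-1/2}N(1-x)^{N-1}\,dx<\infty$). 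For $j\ge i$ one has
\[
(M^*PM)_{ij}=\sum_{k=0}^{\infty}\frac{N^2\prod_{t=1}^{N-1}(j+t-i+k)\prod_{t=1}^{N-1}(k+t)}{\prod_{t=1}^{2N}(j+t+k)},
\]
\[
(MM^*)_{ij}=\frac{N^2\sum_{k=0}^{i}\prod_{t=1}^{N-1}(i+t-k)\prod_{t=1}^{N-1}(j+t-k)}{\prod_{t=1}^{N}(i+t)\prod_{t=1}^{N}(j+t)}.
\]

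For the first series I would look for a polynomial $p=p(k)$ (with coefficients depending on $i$, $j$, and $N$) of degree $2N-2$ such that, with $s(k):\equiv N^2 p(k)\big/\prod_{t=1}^{2N-1}(j+t+k)$, the $k$-th summand equals $s(k)-s(k+1)$; clearing denominators, this is the difference equation
\[
p(k)(k+j+2N)-p(k+1)(k+j+1)=\prod_{t=1}^{N-1}(j+t-i+k)\prod_{t=1}^{N-1}(k+t).
\]
The linear map $p\mapsto p(k)(k+j+2N)-p(k+1)(k+j+1)$ sends the $(2N-1)$-dimensional space of polynomials of degree $\le 2N-2$ into itself and is injective: a kernel element $q$ satisfies $q(k)(k+j+2N)=q(k+1)(k+j+1)$, and evaluating this at $k=-(j+1),-(j+2),\dots$ in turn forces $q$ to be divisible by $(k+j+1)(k+j+2)\cdots(k+j+2N-1)$, hence $q\equiv0$. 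So a unique such $p$ exists, it is monic of degree $2N-2$, and since $\deg p<2N-1$ we get $s(k)\to0$ and $(M^*PM)_{ij}=s(0)=N^2 p(0)\big/\prod_{t=1}^{2N-1}(j+t)$. One then checks that $p(0)$, viewed as a polynomial in $j$, is divisible by $(j+N+1)(j+N+2)\cdots(j+2N-1)$, so that after cancellation $(M^*PM)_{ij}=N\,W(i,j)\big/\big(\binom{2N-1}{N-1}\prod_{t=1}^{N}(j+t)\big)$ for an explicit polynomial $W$ of degree $N-1$ in each variable (consistent with $N=1,2,3,4$).

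For $MM^*$ I would substitute $\ell=i-k$, reducing the inner sum to $(N-1)!^2\sum_{\ell=0}^{i}\binom{\ell+N-1}{N-1}\binom{j-i+\ell+N-1}{N-1}$, expand the second binomial by Chu--Vandermonde as $\sum_{s=0}^{N-1}\binom{\ell}{s}\binom{j-i+N-1}{N-1-s}$, and use the hockey-stick evaluation $\sum_{\ell=0}^{i}\binom{\ell+N-1}{N-1}\binom{\ell}{s}=\binom{N-1+s}{N-1}\binom{i+N}{N+s}$ to obtain
\[
\sum_{k=0}^{i}\prod_{t=1}^{N-1}(i+t-k)\prod_{t=1}^{N-1}(j+t-k)=(N-1)!^2\sum_{s=0}^{N-1}\binom{j-i+N-1}{N-1-s}\binom{N-1+s}{N-1}\binom{i+N}{N+s};
\]
dividing by $\prod_{t=1}^{N}(i+t)$ (note $\binom{i+N}{N+s}\big/\prod_{t=1}^{N}(i+t)=i(i-1)\cdots(i-s+1)/(N+s)!$) should exhibit $(MM^*)_{ij}$ as $N\,W(i,j)\big/\big(\binom{2N-1}{N-1}\prod_{t=1}^{N}(j+t)\big)$ with the very same $W$. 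Then $(MM^*)_{ij}=(M^*PM)_{ij}$ for $j\ge i$; since both matrices are real and symmetric, equality holds for all $i,j$, so $MM^*=M^*PM$ and $C(N)$ is posinormal with interrupter $P(N)$. Each diagonal entry $p_n=\prod_{t=1}^{N}\frac{n+t}{n+N+t}$ is increasing in $n$ with $1/\binom{2N}{N}=p_0\le p_n<1$, so $0<P(N)<I$; hence $M^*M-MM^*=M^*\big(I-P(N)\big)M\ge0$, giving hyponormality, and since $P(N)$ is boundedly invertible, coposinormality follows from [\textbf{5}, Theorem $1$(d)].

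The main obstacle is performing these evaluations, and especially verifying their agreement, uniformly in $N$ rather than one $N$ at a time as in Sections $2$ and $3$. The $MM^*$ side is within reach via the Chu--Vandermonde/hockey-stick computation above, which produces an explicit $W$; the difficulty is on the $M^*PM$ side, where one must pin down $p(0)$ (equivalently, sum the telescoping series) in closed form for general $N$ and recognize it as the same $W$. Equivalently, one must sum a convergent but non-terminating, $1$-balanced (Saalsch\"{u}tzian) ${}_3F_2$ at $1$ --- namely ${}_3F_2(N+j-i,\,N,\,j+1;\ j-i+1,\,2N+1+j;\,1)$ --- whose parameters happen to have the integer differences ($a-d=N-1$, $c-d=i$) that allow such a reduction; handling this cleanly for all $N$ at once is the crux. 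Alternatives would be an induction on $N$ (passing from $C(N)$ to $C(N-1)$ via the factorization $C(N)=D_N(I-S)^{-N}$ with $S$ the unilateral shift and $D_N=\operatorname{diag}\{N!/\prod_{t=1}^{N}(n+t)\}$), or a degree argument noting that for each fixed $i$ both $\prod_{t=1}^{N}(j+t)(MM^*)_{ij}$ and $\prod_{t=1}^{N}(j+t)(M^*PM)_{ij}$ are polynomials in $j$ of degree $\le N-1$, so agreement at $N$ values of $j$ would suffice --- though verifying even that uniformly in $N$ still requires evaluating the series.
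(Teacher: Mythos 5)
The statement you are addressing is recorded in the paper only as a conjecture: the author establishes the identity $MM^*=M^*P(N)M$ by explicit computation for $N\le 4$ and offers no argument for $N>4$, so there is no proof of the paper's to compare yours against. Your framework is the correct generalization of Sections 2 and 3, and several of its completed pieces are sound and even go beyond what the paper does: the series for $(M^*PM)_{ij}$ and the finite sum for $(MM^*)_{ij}$ are set up correctly; the existence, uniqueness, and monicity of the degree-$(2N-2)$ telescoping polynomial $p$ (via injectivity of the difference operator on polynomials of degree $\le 2N-2$) is a genuine structural argument where the paper merely exhibits $p$ for $N=3,4$; the Chu--Vandermonde/hockey-stick closed form for the $MM^*$ side checks out; and the passage from $MM^*=M^*P(N)M$ with $0<P(N)\le I$ and $P(N)$ boundedly invertible to hyponormality and coposinormality is exactly the paper's own mechanism.

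But, as you yourself acknowledge, the crux is missing: you do not evaluate $p(0)$ (equivalently, the non-terminating Saalsch\"{u}tzian ${}_3F_2$ you identify) in closed form for general $N$, the asserted divisibility of $p(0)$ by $(j+N+1)\cdots(j+2N-1)$ is only stated (``one then checks''), and the identification of the resulting quotient with the polynomial $W$ coming from the $MM^*$ side is not carried out. Until that matching is done uniformly in $N$, the central identity $MM^*=M^*P(N)M$ --- and with it the coposinormality claim --- remains unproved; this is precisely the step at which the author stopped and recorded the statement as a conjecture. Note also that, per the paper's Update, boundedness, posinormality, and hyponormality of $C(N)$ for every $N\ge 1$ already follow from Sharma's subnormality theorem, so the substantive open content of the conjecture is exactly the specific diagonal interrupter identity and the coposinormality it yields --- which is exactly the part your sketch leaves open. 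What you have is a credible and well-organized plan of attack, not a proof.
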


There are obvious corollaries to this conjecture, similar to Corollaries $2.3$ and $2.4$.

\section{Update}

The following information has been provided by Billy E. Rhoades.

\vspace{2mm}
 
\noindent Sharma proved that every Hausdorff operator which is a bounded linear operator on $\ell^2$ is subnormal  [\textbf{10}, Theorem $2$].  Since every subnormal operator is hyponormal, and every hyponormal operator is posinormal, the Ces\`{a}ro matrix of order $\alpha$ is known to be posinormal and hyponormal on $\ell^2$  for all $\alpha \geq 1$.

\vspace{2mm}

Author's addendum: 

\vspace{2mm}

\noindent The work with the diagonal interrupter presented here can still be used to justify coposinormality (and corollaries), and that does not follow from [\textbf{10}].

\end{document}